\newtheorem{theorem}{Theorem}[section]
\newtheorem{proposition}[theorem]{Proposition}
\newtheorem{lemma}[theorem]{Lemma}
\newtheorem{corollary}[theorem]{Corollary}
\theoremstyle{definition}
\newtheorem{definition}[theorem]{Definition}
\newtheorem{question}[theorem]{Question}
\newtheorem{remark}[theorem]{Remark}
\DeclareMathOperator{\lk}{\mathrm{lk}}
\title{On flag spheres with few equators}
\author[L. Venturello]{Lorenzo Venturello}
\email{lven@kth.se, lorenzo.venturello@hotmail.it}
\address{Department of Mathematics, KTH Royal Institute of Technology, Stockholm, Sweden}
\date{\today}
\begin{document}

\maketitle
\begin{abstract}
    In this note we construct a flag simplicial $3$-sphere $\Delta$ with the following properties:
    \begin{itemize}
        \item[-] $\Delta$ is not a suspension;
        \item[-] $\Delta$ has no edge that can be contracted to obtain another flag sphere;
        \item[-] The only equators (induced subcomplexes which are spheres of codimension $1$) of $\Delta$ are vertex links.
    \end{itemize}
    Our construction has $12$ vertices, the minimum number of vertices such a simplicial complex can have. This answers a question posed by Chudnovsky and Nevo.
\end{abstract}
\section{Introduction}
This manuscript is concerned with the construction of a particular \emph{flag} simplicial sphere. A simplicial sphere, more generally a simplicial complex, is flag if it is equal to the clique complex of its graph. The interest in this family sparks from many directions, most notably in the study of non-positively curved cubical manifold. Indeed, the work of Gromov established that a simply connected cubical complex is CAT(0) if and only if the vertex links are flag simplicial complexes. If the cubical complex is also a manifold, then the property of being CAT(0) is equivalent to having flag simplicial spheres as vertex links. Moreover, via the Stanley-Reisner correspondence, flag simplicial spheres correspond to quadratic squarefree monomial ideals, whose quotient is a Gorenstein ring.\\
One of the main problems in the subject consists of characterizing linear inequalities satisfied by the number of faces in each dimension of any flag sphere. In particular, a conjecture of Gal \cite{Gal} predicts $\lfloor \frac{d+1}{2}\rfloor$ many particular linear inequalities among the face numbers of a flag simplicial $d$-sphere. It is easily seen that they are not valid for all spheres, so being flag is really essential. These inequalities can be presented as nonnegativity conditions on an integer linear transformation of the face vector of a flag simplicial sphere, called the $\gamma$-vector. Since its introduction a lot of research has been carried out to prove nonnegativity of the $\gamma$-vector of special families of flag spheres, and few strengthenings have been proposed (see e.g., \cite{NP}). \\
In order to address Gal's conjecture one could look for a reduction-type strategy as proposed in \cite{CN}, and apply induction (on the number of vertices and on the dimension). More precisely, we investigate if every flag sphere can be ``reduced" to a smaller flag sphere by changing the $\gamma$-vector in a controlled way. We consider three ways to perform this reduction: one can either contract an edge while preserving flagness (if such an edge exists), remove suspension points (if the sphere is a suspension), or dissect the sphere by cutting along a codimension $1$ sphere and then cone over the boundary of the two balls obtained in this way. The last operation can be always performed, since we can cut a sphere along the link of any vertex. However, in this case one of the two new spheres would be isomorphic to the one we started from, so the operation did not decrease the number of vertices. Hence, in order to have an interesting dissection, we need an \emph{induced equator}, i.e., an induced subcomplex which is a codimension $1$ sphere, that is not the link of any vertex. Recently, Chudnowsky and Nevo \cite{CN} studied such subcomplexes and proposed a structural problem, which would imply a negative answer to the following question. 
\begin{question}\cite[Problem 1.5]{CN}\label{qu: question}
    Does there exist a flag simplicial $d$-sphere $\Delta$ such that:
    \begin{itemize}
        \item[(i)] $\Delta$ is not a suspension,
        \item[(ii)] $\Delta$ does not have any edge which can be contracted to obtain another flag sphere,
        \item[(iii)] All the induced equators of $\Delta$ are vertex links?
    \end{itemize}
\end{question}
In other words, \Cref{qu: question} asks if there exists spheres which are ``irreducible" with respect to the reduction steps described above. The precise statement of \cite[Problem 1.5]{CN} is actually stronger than a negative answer to \Cref{qu: question}, as the authors require the existence for every vertex of an equator which is not a vertex link and it does not contain the fixed vertex. In \cite{CN} the authors answered this question in the negative for $d=2$, by showing that every flag $2$-sphere which is not the octahedron (which is a suspension) has a contractible edge. Moreover, they conjecture that $\gamma$-vectors of induced equators give lower bounds for the $\gamma$-vector of the entire flag sphere, and that this fact would imply Gal's conjecture. Perhaps surprisingly, we will show that \Cref{qu: question} has a positive answer already in dimension $3$.\\ 
Our contribution consists of constructing a flag simplicial $3$-sphere $\Delta_{12,33}$ with the three properties listed in \Cref{qu: question}, by glueing two particular simplicial solid tori along their boundary. The flag $3$-sphere obtained has $12$ vertices, which is the smallest number of vertices a sphere with these properties can have (see \Cref{rem: delta is minimal}), $33$ facets, and its $\gamma$-vector is $\gamma(\Delta_{12,33})=(1,4,1)$. By taking the $k$-fold simplicial join of $\Delta_{12,33}$ we obtain $(4k+3)$-spheres which also give a positive answer to \Cref{qu: question}.

\section{Notation and preliminaries}
An \emph{abstract simplicial complex} $\Delta$ on a finite set $V$ is collection of subsets of $V$ closed under inclusion. The simplicial complex $2^V$ is a $(|V|-1)$-simplex. An element $F\in\Delta$ is a $(|F|-1)$-dimensional \emph{face}, and the \emph{dimension} of $\Delta$ is the maximum of the dimension of one of its faces. Singletons of $\Delta$ are called \emph{vertices}. A simplicial complex is determined by its \emph{facets}, i.e., faces which are maximal w.r.t. inclusion, and we denote by $\langle F_1,\dots,F_k\rangle$ the simplicial complex with facets $F_1,\dots,F_k$. Every subset $W\subset V$ defines a subcomplex $\Delta[W]:=\{F\in \Delta: F\subset W\}$ of $\Delta$. Subcomplexes of this form are \emph{induced}. For every $F\in\Delta$, the \emph{link} of $F$ in $\Delta$ is the simplicial complex $\lk_{\Delta}(F):=\{G\in\Delta: F\cup G\in\Delta, F\cap G=\emptyset\}$. \\
We can associate to every abstract simplicial complex its \emph{geometric realization} $|\Delta|$, i.e., the topological space triangulated by $\Delta$. If $|\Delta|\cong\mathbb{S}^{d}$ then $\Delta$ is a \emph{simplicial $d$-sphere}. We now define an operation on simplicial complexes. the \emph{simplicial join} of two simplicial complexes $\Delta$ and $\Gamma$ is the simplicial complex 
\[
    \Delta*\Gamma = \{F\cup G: F\in\Delta, G\in\Gamma\}.
\]
If $\Gamma=\langle \{v\} \rangle$ then $\{v\}*\Delta:=\Delta*\Gamma$ is the \emph{cone} over $\Delta$. If $\Gamma=\langle \{v\},\{w\}\rangle$ then $\Delta*\Gamma$ is the \emph{suspension} over $\Delta$. We say that a simplicial complex is isomorphic to $\Diamond_d$ is it is isomorphic to the $d$-fold suspension of the $0$-sphere. A simplicial complex isomorphic to $\Diamond_d$ is then a simplicial $(d-1)$-sphere on $2d$ vertices which can be realized geometrically as the boundary complex of the $d$-dimensional \emph{cross-polytope}. In particular, $\Diamond_2$ is a $4$-cycle, and $\Diamond_3$ is isomorphic to the boundary complex of an octahedron. If $\Delta\cong\Diamond_d$, then every vertex of $\Delta$ as a unique non-adjacent vertex in $\Delta$, and $\Delta$ consists precisely of all possible subsets of the vertex set which do not contain a pair of non-adjacent vertices. We now fix a further piece of notation. Consider two ordered tuples $[x_1,\dots,x_d]$ and $[y_1,\dots,y_d]$ of $2d$ distinct elements. We define
    \[
        \Diamond([x_1,\dots,x_d],[y_1,\dots,y_d]):=\langle \{x_1\},\{y_1\}\rangle*\cdots*\langle \{x_d\}, \{y_d\}\rangle.
    \] 
Evidently $\Diamond([x_1,\dots,x_d],[y_1,\dots,y_d])\cong \Diamond_d$, and $\{x_i,y_i\}\neq \Diamond([x_1,\dots,x_d],[y_1,\dots,y_d])$ for every $i=1,\dots,d$.\\
We now define the main class of simplicial complexes we treated in this article.  
\begin{definition}
    A simplicial complex $\Delta$ on $V$ is \emph{flag} if every non-face (a subset $F\subset V$ with $F\notin\Delta$) which does not properly contain any other non-face has cardinality $2$.
\end{definition}
Equivalently, a simplicial complex is flag if it is the clique complex of its graph. The simplicial sphere $\Diamond([x_1,\dots,x_d],[y_1,\dots,y_d])$ is flag, as the subsets $\{x_1,y_i\}$ are the minimal non-faces. In fact, it is the unique vertex minimal flag simplicial $(d-1)$-sphere. Moreover, any flag simplicial $3$-sphere can be obtained from $\Diamond_4$ via a sequence of edge subdivisions and edge contractions. We refer to \cite{LuN} for the details and the more general result for flag PL spheres of any dimension. However, not all edges can be contracted preserving the flag property. We say that an edge of $\Delta$ is \emph{contractible} if it is not contained in any induced $4$-cycle of $\Delta$, i.e., an induced subcomplex isomorphic to a $4$-cycle.

\section{The construction}\label{sec 2}
In this section we will construct a simplicial complex by identifying two triangulated solid tori along their isomorphic boundary. The resulting complex is a simplicial $3$-manifold and all manifolds which can be obtained in this way are called \emph{lens spaces}. We will argue in \Cref{thm: 3-sphere} that our construction is indeed a flag simplicial $3$-sphere. 
%
We define
\begin{align*}
\Gamma_1:=&\{v_1\}*\Diamond([x_1,x_2,x_3],[y_1,y_2,y_3])\cup \nonumber\\ &\{v_2\}*\Diamond([y_1,y_2,y_3],[z_1,z_2,z_3])\cup\\ &\{v_3\}*\Diamond([x_1,x_2,x_3],[z_3,z_1,z_2]).\nonumber
\end{align*}
$\Gamma_1$ is the union of $3$ a simplicial $3$-balls, each isomorphic yo the cone over $\Diamond_3$. The three balls intersect pairwise in the $2$-simplices with vertices $\{x_1,x_2,x_3\}$, $\{y_1,y_2,y_3\}$ and $\{z_1,z_2,z_3\}$, in a way that $|\Gamma_1|\cong \mathbb{D}^2\times \mathbb{S}^1$. The boundary complex $\partial\Gamma_1$ depicted in \Cref{fig: link}, which triangulates the torus, has the following facets:
\begin{align}
   \partial\Gamma_1=\langle &x_1x_2y_3, x_1y_2x_3, x_2x_3y_1, x_1y_2y_3, x_2y_1y_3, x_3y_1y_2,\nonumber\\\label{eq: boundary}
    &y_1y_2z_3, y_1y_3z_2, y_2y_3z_1, y_1z_2z_3, y_2z_1z_3, y_3z_1z_2,\\
   &x_1x_2z_2, x_1x_3z_1, x_2x_3z_3, x_1z_1z_2, x_2z_2z_3, x_3z_1z_3\rangle.\nonumber
\end{align}
Here and in the rest of this article we will shorten the notation by identifying a face with a string of vertices. For instance $x_1x_2y_3$ denotes the set $\{x_1,x_2,y_3\}$.\\

We will now construct a second simplicial complex $\Gamma_2$, such that $|\Gamma_2|\cong \mathbb{D}^2\times \mathbb{S}^1$ and $\Gamma_1\cap\Gamma_2=\partial\Gamma_1=\partial\Gamma_2$. To do so, we consider a $2$-simplex $\Delta_2=\langle\{a,b,c\}\rangle$ and a $3$-cycle $C_3=\langle\{\{1,2\},\{2,3\},\{1,3\}\rangle$. We can triangulate each of the prisms $\Delta_2\times \langle\{1,2\}\rangle$, $\Delta_2\times \langle\{2,3\}\rangle$ and $\Delta_2\times \langle\{1,3\}\rangle$ individually. As these prisms intersect pairwise in a $2$-simplex, their union is a simplicial refinement of $\Delta_2\times C_3$. By \cite[Section 6.2.2]{DRS}, a triangulation of $\Delta_2\times \Delta_1$ is given in the following way: after ordering the vertices of $\Delta_2$ and $\Delta_1$ we can represent every pair of vertices as an integer point in the square $[1,3]\times [1,2]$. Facet of the corresponding triangulation are given by the $3$ non-decreasing paths from $(1,1)$ to $(3,2)$. We order the vertices of $\Delta_2$ as $a<b<c$, and the vertices of the three edges in the following way: $1<2$, $2<3$ and $3<1$. Clearly, this is not a total order on the vertices of $C_3$, but only on each of the edges. 
We then obtain $9$ facets, as depicted in \Cref{fig: staircases}.
\begin{figure}[h]
    \centering
    \includegraphics[scale=0.77]{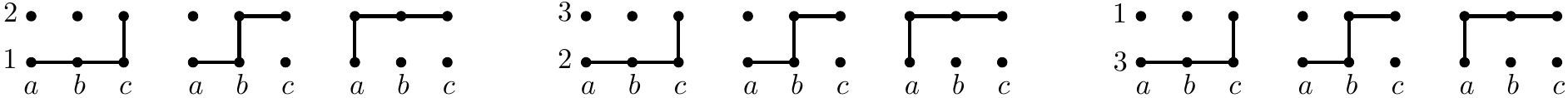}
    \caption{The $9$ facets of $\Gamma_2$}
    \label{fig: staircases}
\end{figure}
More explicitly, we list these facets representing coordinates $(i,j)$ as $i_j$.
\begin{align}\label{eq: facets abc}
    F:=\{ \nonumber&\underline{a_1b_1c_1}c_2,\underline{a_1b_1}b_2\underline{c_2},\underline{a_1}a_2\underline{b_2c_2},\\ 
    &\underline{a_2b_2c_2}c_3, \underline{a_2b_2}b_3\underline{c_3},\underline{a_2}a_3\underline{b_3c_3},\\
    \nonumber&\underline{a_3b_3c_3}c_1, \underline{a_3b_3}b_1\underline{c_1}, \underline{a_3}a_1\underline{b_1c_1}\}.
\end{align}
Consider now the following bijection: 
\begin{align}
    \varphi:~ a_1 \mapsto z_2, a_2 \mapsto z_3, a_3 \mapsto z_1,
    b_1 \mapsto y_3, b_2 \mapsto y_1, b_3 \mapsto y_2,
    c_1 \mapsto x_1, c_2 \mapsto x_2, c_3 \mapsto x_3. 
\end{align} 
The simplicial complex $\Gamma_2:=\langle \varphi(F)\rangle$ is homeomorphic to $|\Delta_2\times C_3|\cong \mathbb{D}^2\times \mathbb{S}^1$ and it is supported on a subset of the vertex set of $\Gamma_1$. As every face in the interior of $\Gamma_1$ contains one of the vertices in $\{v_1,v_2,v_3\}$, and these are not vertices of $\Gamma_2$, we have that $\Gamma_1$ and $\Gamma_2$ intersect in their boundary. Moreover, if we apply $\varphi$ to all the $3$-elements subsets which are contained in exactly one element in $F$ we obtain precisely the faces in \eqref{eq: boundary}. Thus, $\partial\Gamma_1=\partial\Gamma_2$. Finally, let
\[
    \Delta_{12,33}:=\Gamma_1\cup \Gamma_2.
\]
The notation indicates that $\Delta_{12,33}$ is a simplicial complex on $12$ vertices and $33$ facets ($24$ coming from $\Gamma_1$ and $9$ from $\Gamma_2$). These facets are listed in \eqref{eq: facets delta}. Via the so-called Dehn-Sommerville relations the number of edges and $2$-faces of $\Delta_{12,33}$ can be deduced. Its $f$-vector (the vector recording the number of faces in each dimension) equals $f(\Delta_{12,33})=(1,12,45,66,33)$.
\begin{align}\label{eq: facets delta}
   \nonumber& x_1y_2y_3z_1, y_2y_3z_1v_2, y_1y_2y_3v_2, y_1y_2z_3v_2, y_2z_1z_3v_2, x_3y_2z_1z_3, x_1x_3y_2z_1,\\
\nonumber&x_1x_3z_1v_3, x_3y_1y_2z_3, z_1z_2z_3v_2, x_3y_1y_2v_1, x_2x_3y_1v_1, x_3z_1z_3v_3, z_1z_2z_3v_3,\\
&x_1z_1z_2v_3, x_1y_3z_1z_2, y_3z_1z_2v_2, y_1y_3v_2z_2, x_1x_3y_2v_1, x_1x_2x_3v_1, x_1y_2y_3v_1,\\
\nonumber&x_2x_3y_1z_3, x_2x_3z_3v_3, y_1z_2z_3v_2, x_2y_1z_2z_3, x_2y_1z_2y_3, y_1y_2y_3v_1, x_2y_1y_3v_1,\\
\nonumber&x_1x_2y_3v_1, x_1x_2y_3z_2, x_1x_2x_3v_3, x_1x_2z_2v_3, x_2z_2z_3v_3.
\end{align}
Before stating the main result, we highlight some features of the simplicial complex $\Delta_{12,33}$ in the following remarks.
\begin{remark}\label{rem: tau}
     $\Delta_{12,33}$ has a rich automorphism group. In particular the permutation
    \[
        \tau = (x_3,y_2,z_1,x_1,y_3,z_2,x_2,y_1,z_3)(v_2,v_3,v_1)
    \]
    is an automorphism of $\Delta_{12,33}$. Thus, the links of vertices in $\{v_1,v_2,v_3\}$ are isomorphic, and so are the links of vertices in $V\setminus\{v_1,v_2,v_3\}$. In particular, $\lk_{\Delta_{12,33}}(v_1)\cong \Diamond_3$, while $\lk_{\Delta_{12,33}}(x_1)$ is isomorphic to the flag $2$-sphere on $8$ vertices depicted in \Cref{fig: link}.
\end{remark}
\begin{remark}\label{rem: on delta}
    By direct inspection on its graph, we make the following observation on $\lk_{\Delta_{12,33}}(x_1)$. Let $E=\lk_{\Delta_{12,33}}(x_1)[S]$ be an equator (an induced cycle) of $\lk_{\Delta_{12,33}}(x_1)$. If $S\cap \{v_1,v_2,v_3\}=\emptyset$, then either $S=\{x_2,x_3,y_2,y_3\}$ or $S=\{x_2,x_3,y_3,z_1\}$. In particular, $\{x_2,x_3\}\subset S$ and $E$ is a $4$-cycle. We will make use of these two facts in the proof of \Cref{thm: equators}. Observe that, acting on $\Delta_{12,33}$ with the automorphism $\tau$ in \Cref{rem: tau} we obtain that any induced cycle in $\lk_{\Delta_{12,33}}(x_i)$ (respectively $\lk_{\Delta_{12,33}}(y_i)$, $\lk_{\Delta_{12,33}}(z_i)$) not containing vertices in $\{v_1,v_2,v_3\}$ is a $4$-cycle, and it contains $x_j$ (resp. $y_j$, $z_j$) for every $j\in\{1,2,3\}\setminus\{i\}$.
\end{remark}
\begin{remark}
    Observe that neither $\Gamma_1$ nor $\Gamma_2$ are flag simplicial complexes. As all internal vertices of $\Gamma_1$ are pairwise non-adjacent and the only faces in the interior which do not contain a vertex in $\{v_1,v_2,v_3\}$ are $x_1x_2x_3$, $y_1y_2y_3$ and $z_1z_2z_3$, the minimal non-faces of $\Gamma_1$ are precisely the minimal non-faces of $\partial\Gamma_1$ which are not $x_1x_2x_3$, $y_1y_2y_3$ and $z_1z_2z_3$. The missing triangles are the $9$ subsets of the form $\{x_i,y_j,z_k\}$ in which vertices are pairwise adjacent:
    \begin{equation}\label{eq: misiing gamma1}
        \{
        x_1y_2z_1, x_1y_3z_1, x_1y_3z_2, x_2y_1z_2, x_2y_1z_3, x_2y_3z_2,  x_3y_1z_3, x_3y_2z_1, x_3y_2z_3\}.  
    \end{equation}
    $\Gamma_2$ doesn't have any interior face of dimension smaller than $2$. Therefore, its minimal non-faces are contained in those of $\partial\Gamma_2$
\end{remark}
\begin{figure}[h]
    \centering
    \includegraphics[scale=0.5]{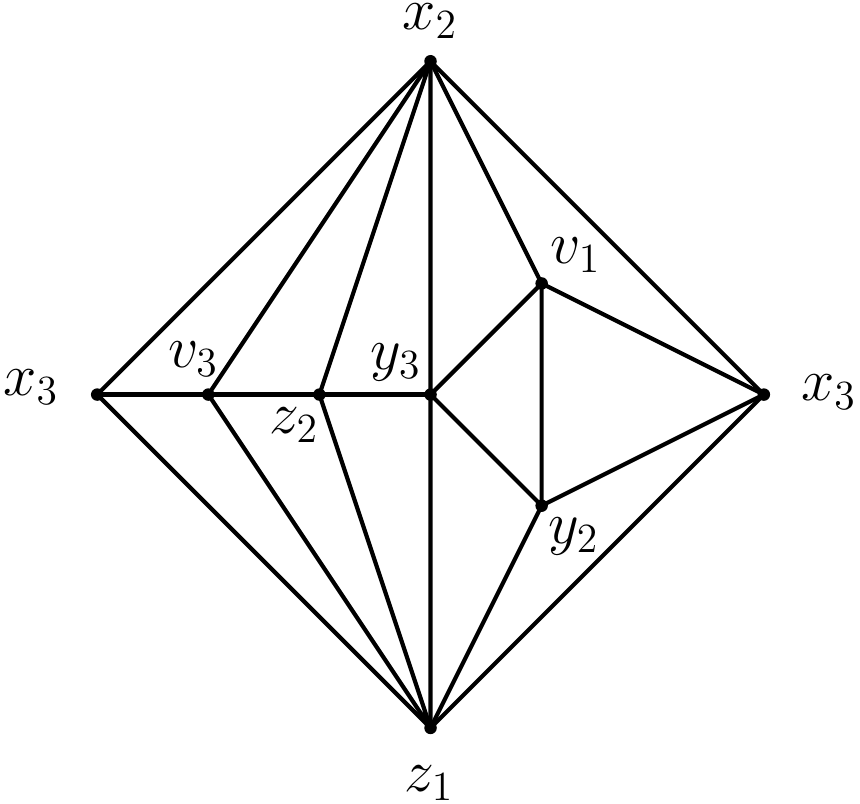}
    \qquad\includegraphics[scale=0.3]{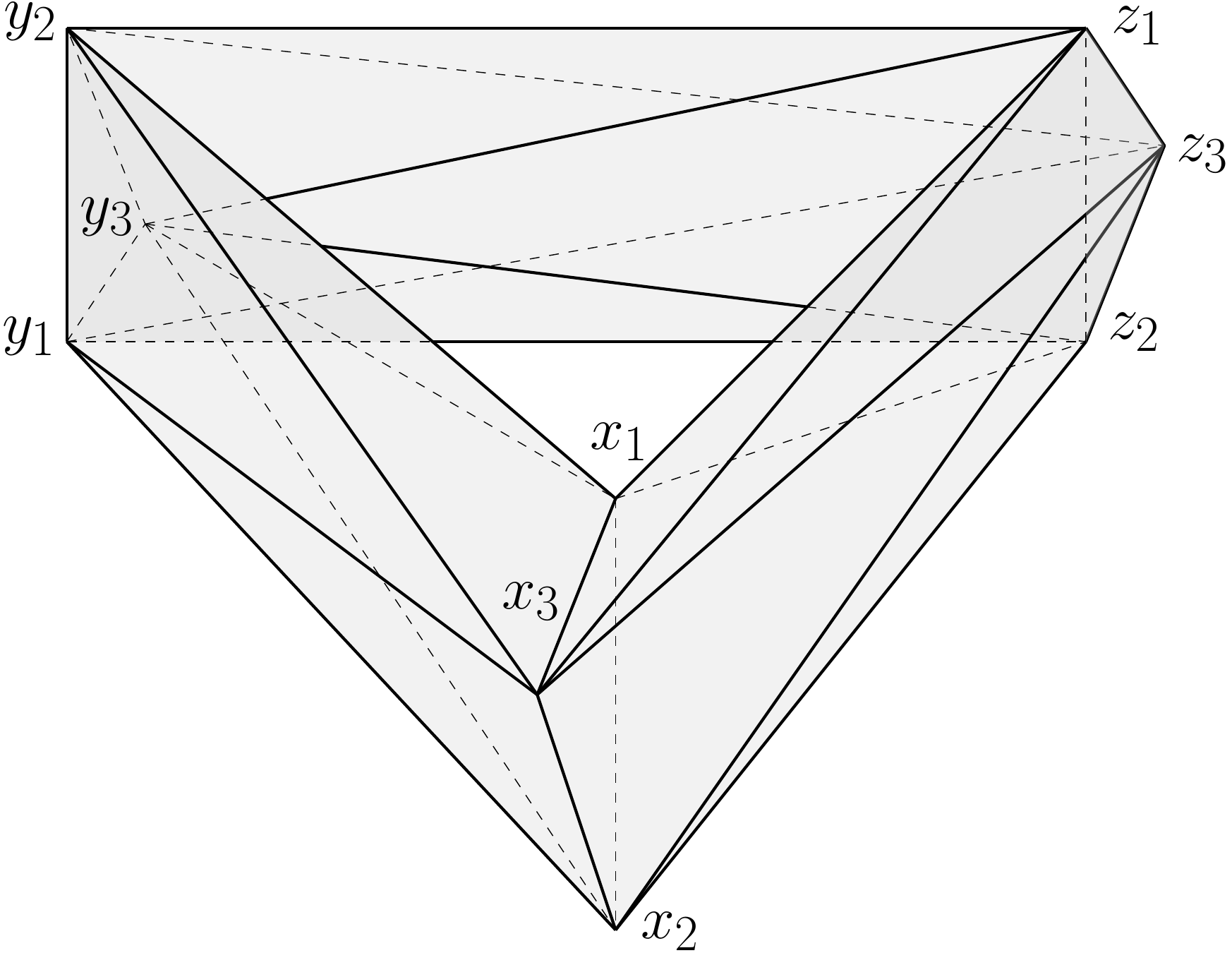}
    \caption{The $2$-sphere $\lk_{\Delta_{12,33}}(x_1)$ (left) and the simplicial complex $\partial\Gamma_1=\partial\Gamma_2$ (right).}
    \label{fig: link}
\end{figure}
\begin{lemma}\label{lem: equators psi}
    Let $M$ be a simplicial complex with $|M|\cong \mathbb{D}^2\times \mathbb{S}^1$. Assume $M$ does not have vertices in its interior. Then there is no induced equator in $M$.
\end{lemma}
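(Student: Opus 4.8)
Suppose, for contradiction, that $E=M[S]$ is an induced equator, where $S$ denotes its vertex set and $|E|\cong\mathbb{S}^2$. The plan is to combine a combinatorial constraint coming from the word ``induced'' with the topology of the solid torus: the key point will be that one side of $|E|$ is an embedded $3$-ball whose triangulation uses only vertices of $S$, forcing a tetrahedron of $M$ spanned by $S$ — which an induced $2$-sphere cannot contain.

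First I would record the combinatorial input. Since $M$ has no interior vertices, every vertex of $S$ lies on $\partial M\cong\mathbb{T}^2$. Since $E$ is induced, any tetrahedron $\sigma$ of $M$ with $\sigma\subseteq S$ would lie in $M[S]=E$, which is impossible because $\dim E=2$; hence \emph{no tetrahedron of $M$ is spanned by a subset of $S$}. Moreover, the faces of $M$ contained in $S$ are exactly the faces of $E$, and the relative interior of a maximal face of $M$ meets no proper face; it follows that the interior of every tetrahedron of $M$ is disjoint from $|E|=|M[S]|$, and so lies in a single component of $|M|\setminus|E|$.

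The heart of the argument, and the step I expect to be the main obstacle, is to show that $|E|$ bounds an embedded $3$-ball $\mathbb{B}$ in $|M|$. Informally this is just that the solid torus $|M|\cong\mathbb{D}^2\times\mathbb{S}^1$ is irreducible; the point requiring care is that our polyhedral sphere may touch $\partial M$ and so need not be bicollared. I would pass to the universal cover $\widetilde M\cong\mathbb{D}^2\times\mathbb{R}$, realized as the solid cylinder $\{x^2+y^2\le 1\}\subseteq\mathbb{R}^3$ with deck group generated by the translation $(x,y,z)\mapsto(x,y,z+1)$. Since $\pi_1(|E|)=1$, the preimage of $|E|$ is a disjoint $\mathbb{Z}$-orbit of PL $2$-spheres, each mapped homeomorphically onto $|E|$; by PL Schoenflies in $\mathbb{R}^3$ each bounds a PL $3$-ball, which lies in $\widetilde M$ because $\mathbb{R}^3\setminus\widetilde M$ is connected and unbounded. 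The translation permutes these balls, and since a compact set cannot be carried strictly inside itself by a nontrivial translation, they cannot be nested; hence they are pairwise disjoint, the covering projection embeds one of them, and its image $\mathbb{B}$ is a PL $3$-ball in $|M|$ with $\partial\mathbb{B}=|E|$. One then checks that $\mathrm{int}\,\mathbb{B}$ is one of the components of $|M|\setminus|E|$ and that $\mathbb{B}$ is its closure.

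Finally I would conclude. Because $\mathbb{B}$ is a PL-embedded $3$-ball, $\mathrm{int}\,\mathbb{B}$ is open in $|M|$, hence disjoint from $\partial M$ (a boundary point of $|M|$ has no Euclidean neighbourhood), so $\mathbb{B}\cap\partial M\subseteq\partial\mathbb{B}=|E|$; in particular no vertex of $M$ lies in $\mathrm{int}\,\mathbb{B}$. Setting $M_1:=\{F\in M:|F|\subseteq\mathbb{B}\}$, the bookkeeping of the first step gives $|M_1|=\mathbb{B}\cong\mathbb{D}^3$, so $M_1$ has a tetrahedron $\sigma$; every vertex of $M_1$ lies in $\mathbb{B}\cap\partial M\subseteq|E|$ and is therefore a vertex of $E$, so $\sigma\subseteq S$ — contradicting the first step. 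Hence $M$ has no induced equator. The identifications $|M_1|=\mathbb{B}$ and $\mathrm{int}\,\mathbb{B}=$ (a component of $|M|\setminus|E|$), together with the analogous point-set and PL statements, are routine and I would keep them brief; the only genuine content lies in the ball-bounding step.
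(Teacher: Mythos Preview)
Your argument is correct, but it takes a quite different route from the paper's. The paper observes that a $3$-manifold with boundary having no interior vertices is \emph{$2$-stacked}, and then invokes a result of Bagchi stating that for such manifolds the map $H_2(E;K)\to H_2(M;K)$ is injective for every induced subcomplex $E$; since $H_2(\mathbb{D}^2\times\mathbb{S}^1;K)=0$, no induced subcomplex can be a $2$-sphere. Your proof instead works geometrically: you lift to the universal cover, apply PL Schoenflies in $\mathbb{R}^3$ to trap the sphere as the boundary of a ball, push the ball back down, and then exploit the absence of interior vertices to force a tetrahedron of $M$ with all vertices in $S$, contradicting inducedness.

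The paper's approach is shorter and in fact yields the stronger conclusion that no induced subcomplex of $M$ has nonzero $H_2$; it also generalises verbatim to any $2$-stacked manifold with boundary whose second homology vanishes. Your approach is more self-contained, relying only on the classical Alexander--Schoenflies theorem and elementary covering space reasoning rather than a black-box citation; it also makes the role of irreducibility of the solid torus visible. One minor comment: you don't actually need the full identification $|M_1|=\mathbb{B}$ in the final step --- it suffices to find a single tetrahedron contained in $\mathbb{B}$, which follows immediately once you know $\mathrm{int}\,\mathbb{B}$ is open in $|M|$ (hence meets the interior of some tetrahedron) and is contained in one component of $|M|\setminus|E|$.
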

\begin{proof}
    $M$ is a \emph{$2$-stacked} simplicial manifold with boundary (see e.g. \cite{Ba}). In \cite[Corollary 4, Theorem 7]{Ba} the author proves that for every induced subcomplex $E$ of $M$ the map $H_2(E;K)\to H_2(M;K)$ induced by the inclusion is injective. As $H_2(M;K)=0$ the claim follows. 
\end{proof}
The simplicial complex $\Gamma_2$ satisfies the conditions of \Cref{lem: equators psi}. If $S\subseteq V(\Delta_{12,33})$, is a subset which does not contain any vertex $v_{i}$ and such that $\{x_{1},x_{2},x_{3}\}\nsubseteq S$, $\{y_{1},y_{2},y_{3}\}\nsubseteq S$ and $\{z_{1},z_{2},z_{3}\}\nsubseteq S$, then $\Delta_{12,33}[S]=\Gamma_2[S]$. In particular, \Cref{lem: equators psi} shows that $\Delta_{12,33}[S]$ is not an equator. 
\begin{theorem}\label{thm: 3-sphere}
$\Delta_{12,33}$ is a flag simplicial $3$-sphere.
\end{theorem}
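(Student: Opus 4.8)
The plan is to prove the two assertions — that $|\Delta_{12,33}|$ is homeomorphic to $\mathbb{S}^3$, and that $\Delta_{12,33}$ is flag — separately.

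For the topological statement I would first check that every vertex link is a $2$-sphere. By \Cref{rem: tau} only $\lk_{\Delta_{12,33}}(v_1)$ and $\lk_{\Delta_{12,33}}(x_1)$ need to be treated: the first is $\Diamond([x_1,x_2,x_3],[y_1,y_2,y_3])\cong\Diamond_3$, and the second is the union of the disks $\lk_{\Gamma_1}(x_1)$ and $\lk_{\Gamma_2}(x_1)$ (here $x_1$ is a boundary vertex of each of the solid tori $\Gamma_1,\Gamma_2$) glued along their common boundary circle $\lk_{\partial\Gamma_1}(x_1)$, hence a $2$-sphere, the one drawn in \Cref{fig: link}. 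It follows that $|\Delta_{12,33}|$ is a closed connected $3$-manifold; being $\Gamma_1\cup\Gamma_2$ with $\Gamma_1\cap\Gamma_2=\partial\Gamma_1=\partial\Gamma_2$ a torus and $|\Gamma_i|\cong\mathbb{D}^2\times\mathbb{S}^1$, it carries a genus $1$ Heegaard splitting, so it is either $\mathbb{S}^2\times\mathbb{S}^1$ or a lens space. To identify it I would find a meridian disk in each handlebody: cutting $\Gamma_1$ along the interior triangle $x_1x_2x_3$ leaves a linear chain of three $3$-balls glued along $2$-disks, i.e.\ a $3$-ball, so $x_1x_2x_3$ is a meridian disk of $\Gamma_1$ with boundary the $3$-cycle on $\{x_1,x_2,x_3\}$; likewise $\varphi(\{a_1,b_1,c_1\})=\{x_1,y_3,z_2\}$ (the image of $\Delta_2\times\{1\}$) is a meridian disk of $\Gamma_2$, with boundary the $3$-cycle on $\{x_1,y_3,z_2\}$. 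These two cycles meet only at $x_1$, and in the cyclic order $x_2,y_3,y_2,x_3,z_1,z_2$ of the neighbours of $x_1$ in $\partial\Gamma_1$ the vertices $x_2,x_3$ separate $y_3$ from $z_2$; hence the two meridians cross transversally at $x_1$ and nowhere else, so their geometric intersection number is $1$ and therefore $|\Delta_{12,33}|\cong\mathbb{S}^3$. (Equivalently, Mayer--Vietoris applied to $\Gamma_1\cup\Gamma_2$ gives $H_1(\Delta_{12,33};\mathbb{Z})=0$, which by itself excludes $\mathbb{S}^2\times\mathbb{S}^1$ and all lens spaces $L(p,q)$ with $p\ne1$.)

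To prove flagness I would show that no minimal non-face of $\Delta_{12,33}$ has more than two elements. A subset of the vertex set is a non-face of $\Delta_{12,33}=\Gamma_1\cup\Gamma_2$ exactly when it is a non-face of both $\Gamma_1$ and of $\Gamma_2$, so I would use the descriptions of the minimal non-faces of $\Gamma_1$ and $\Gamma_2$ recorded in the remarks above. Suppose $S$ is a minimal non-face with $|S|\ge3$. If $v_1\in S$ (the cases $v_2,v_3$ being symmetric via $\tau$): since $v_1\notin\Gamma_2$, $S$ must be a non-face of $\Gamma_1$, and as $\lk_{\Gamma_1}(v_1)\cong\Diamond([x_1,x_2,x_3],[y_1,y_2,y_3])$ is flag, minimality forces $S=\{v_1,x_j,y_j\}$ for some $j$; but $\{x_j,y_j\}$ is a non-edge of $\partial\Gamma_1$, hence of $\Gamma_2$ (which has no interior edges), hence a non-face of $\Delta_{12,33}$ strictly inside $S$ — a contradiction. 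So $S\cap\{v_1,v_2,v_3\}=\emptyset$. If $S$ contained a ``monochromatic'' triple, say $\{x_1,x_2,x_3\}$, then either $S=\{x_1,x_2,x_3\}$, which is a face of $\Gamma_1$, or $|S|\ge4$; in the latter case, since in the graph of $\partial\Gamma_1$ every vertex other than $x_1,x_2,x_3$ is non-adjacent to one of them, $S$ contains a non-edge and is not minimal. Hence $S$ contains no monochromatic triple, and then, as noted in the paragraph before the present theorem, $\Delta_{12,33}[S]=\Gamma_2[S]$, so $S$ is a minimal non-face of $\Gamma_2$. But the minimal non-faces of $\Gamma_2$ are among those of $\partial\Gamma_2=\partial\Gamma_1$, and among the latter the nine triangles $\{x_i,y_j,z_k\}$ of \eqref{eq: misiing gamma1} are interior $2$-faces of $\Gamma_2$, so the only minimal non-faces of $\Gamma_2$ of size $\ge3$ are $\{x_1,x_2,x_3\},\{y_1,y_2,y_3\},\{z_1,z_2,z_3\}$ — contradicting that $S$ avoids monochromatic triples. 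Therefore every minimal non-face of $\Delta_{12,33}$ has cardinality $2$, i.e.\ $\Delta_{12,33}$ is flag.

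The combinatorial bookkeeping for flagness is routine once one has the minimal non-faces of $\Gamma_1$ and $\Gamma_2$ in hand. The delicate point — and the one I expect to be the main obstacle — is the identification of the Heegaard $3$-manifold as $\mathbb{S}^3$: one must correctly match the two meridian curves inside the common Heegaard torus and check that they meet exactly once (equivalently, carry out the Mayer--Vietoris computation with the correct gluing data so as to read off $H_1=0$).
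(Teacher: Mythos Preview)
Your argument is correct, and the flagness half is essentially the paper's reasoning spelled out in more detail. The sphere identification, however, is done by a genuinely different method. The paper does not touch meridian disks at all: it simply quotes Murai's lower bound theorem for simplicial manifolds (\cite[Theorem~1.6]{Mur}), which says that a simplicial $3$-manifold with $H_1\neq 0$ over some field must satisfy $e\ge 4n$; since $\Delta_{12,33}$ has $e=45<48=4n$, one gets $H_1(\Delta_{12,33};K)=0$ for every field $K$, and $\mathbb{S}^3$ is the only lens space with that property. Your approach---finding the meridian $2$-simplex $x_1x_2x_3$ in $\Gamma_1$ and $\varphi(\{a_1,b_1,c_1\})=\{x_1,y_3,z_2\}$ in $\Gamma_2$, and checking that their boundary $3$-cycles cross exactly once on the Heegaard torus---is more hands-on and entirely self-contained, at the cost of a local combinatorial check in $\lk_{\partial\Gamma_1}(x_1)$. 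The paper's route is a one-line inequality once the cited theorem is available; yours avoids the external reference and makes the gluing explicit. The paper also mentions, as an alternative, that the facet order in \eqref{eq: facets delta} is a shelling, which is yet a third way to certify the sphere.

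One small point in your flagness argument: the inclusion ``minimal non-faces of $\Gamma_2$ are among those of $\partial\Gamma_2$'' (taken from the remark preceding the theorem) does not by itself rule out a minimal non-face of $\Gamma_2$ of size $4$ whose $3$-element subsets are interior triangles. To close this you should note that every $4$-clique of the graph of $\partial\Gamma_1$ avoiding the three monochromatic triples is actually one of the nine facets of $\Gamma_2$ (and there are no $5$-cliques). This is a short direct check; with it, your conclusion that the only size-$\ge 3$ minimal non-faces of $\Gamma_2$ are $\{x_1,x_2,x_3\},\{y_1,y_2,y_3\},\{z_1,z_2,z_3\}$ is fully justified.
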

\begin{proof}
    To show that among the lens spaces $|\Delta_{12,33}|$ is a $3$-sphere we employ the lower bound theorem for simplicial manifolds \cite[Theorem 1.6]{Mur}. If for any field $K$ a simplicial $3$-manifold $\Delta$ satisfies  $\dim_{K}H_1(\Delta; K)>0$, then $e\geq 4n$, with $n$ and $e$ the number of vertices and edges respectively. Here $H_1(\Delta; K)$ denotes the first simplicial homology group of $\Delta$ with coefficients in $K$. As for $\Delta_{12,33}$ we have $e=45<48=4n$, it implies that $H_1(\Delta; K)=0$ over any field. We conclude, as the $3$-sphere is the only lens space satisfying this property. 
    In order to prove that $\Delta_{12,33}$ is flag we have to verify that all minimal non-faces of cardinality $3$ of $\Gamma_1$, which are listed in \eqref{eq: misiing gamma1} are faces of $\Gamma_2$. This holds, applying $\varphi^{-1}$ to the sets in \eqref{eq: misiing gamma1} we obtain the $9$ underlined subsets of the facets in \eqref{eq: facets abc}.
\end{proof}
Another way to see that $\Delta_{12,33}$ is indeed a simplicial $3$-sphere is to verify that the ordering of the facets of $\Delta_{12,33}$ in \eqref{eq: facets delta} is a \emph{shelling order} (see \cite[III.2]{StaGreen}, and it is well known that any shellable simplicial manifold is a simplicial sphere.\\
We now turn our attention to the main properties of $\Delta_{12,33}$ related to \Cref{qu: question}.
\begin{theorem}\label{thm no contractible}
$\Delta_{12,33}$ is not a suspension and it has no contractible edge.
\end{theorem}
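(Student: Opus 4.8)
I would establish the two assertions separately.

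\emph{$\Delta_{12,33}$ is not a suspension.} The plan here is a one-line contradiction argument. If $\Delta_{12,33}=\langle\{u\},\{w\}\rangle*\Gamma$ for non-adjacent vertices $u,w$ and a simplicial complex $\Gamma$, then $\Gamma=\lk_{\Delta_{12,33}}(u)$ and its vertex set is $V(\Delta_{12,33})\setminus\{u,w\}$, a set of $12-2=10$ vertices. But by \Cref{rem: tau} every vertex link of $\Delta_{12,33}$ is isomorphic to $\Diamond_3$ (on $6$ vertices) or to the $2$-sphere of \Cref{fig: link} (on $8$ vertices), so no vertex link has $10$ vertices. This part has no real obstacle.

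\emph{$\Delta_{12,33}$ has no contractible edge.} Since an edge is contractible exactly when it lies in no induced $4$-cycle, I need to show that every one of the $45$ edges of $\Delta_{12,33}$ lies in some induced $4$-cycle. The idea is to reduce this to a handful of cases with the order-$9$ automorphism $\tau$ of \Cref{rem: tau}. Under $\langle\tau\rangle$ the edge set decomposes into five orbits: two orbits of edges joining some $v_i$ to a vertex of $V\setminus\{v_1,v_2,v_3\}$ (the orbits of $\{v_1,x_1\}$ and of $\{v_1,y_1\}$), and three orbits of edges among $x_1,\dots,z_3$, namely the ``same-letter'' edges (the orbit of $\{x_1,x_2\}$) and the two remaining types, represented by $\{x_1,y_2\}$ and by $\{x_1,y_3\}$. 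It then suffices to exhibit one induced $4$-cycle through a representative of each orbit.

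In fact two induced $4$-cycles do this at once. First, one checks that $\Delta_{12,33}[\{x_1,x_2,y_1,y_2\}]$ is the induced $4$-cycle on these four vertices whose non-edges are $\{x_1,y_1\}$ and $\{x_2,y_2\}$; its edges $\{x_1,x_2\},\{y_1,y_2\},\{x_1,y_2\},\{x_2,y_1\}$ meet all three ``letter'' orbits. Second, one checks that $\Delta_{12,33}[\{v_1,x_1,y_1,z_2\}]$ is the induced $4$-cycle with non-edges $\{v_1,z_2\}$ and $\{x_1,y_1\}$; its edges include $\{v_1,x_1\}$ and $\{v_1,y_1\}$, one from each of the two remaining orbits. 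Applying the powers of $\tau$ to these two $4$-cycles then produces an induced $4$-cycle through every edge of $\Delta_{12,33}$, which finishes the proof.

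The only genuine work, and what I expect to be slightly delicate bookkeeping rather than a conceptual hurdle, is verifying that these two four-element vertex sets really induce $4$-cycles and that their edges meet all five $\tau$-orbits; both checks are mechanical against the facet list \eqref{eq: facets delta} together with the descriptions of $\lk_{\Delta_{12,33}}(v_1)$ and $\lk_{\Delta_{12,33}}(x_1)$. If one wished to avoid even the orbit bookkeeping, one could instead simply record, for each of the $45$ edges, an induced $4$-cycle containing it; the automorphism $\tau$ merely compresses this to the five cases above.
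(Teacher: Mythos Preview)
Your argument is correct. For the suspension part, the paper uses a vertex--degree count (every vertex has degree $6$ or $8$, hence at least two non-neighbours), while you compare the size of a putative $\lk(u)$ with the actual link sizes from \Cref{rem: tau}; both are one-line arguments. For the non-contractibility part the paper organises the casework differently: it splits on whether an edge contains some $v_i$, uses the fact that each edge of the first type lies in one of the three \emph{induced} octahedra $\lk_{\Delta_{12,33}}(v_i)$ (hence in an induced $4$-cycle of that $\Diamond_3$), and then lists nine explicit induced $4$-cycles, three per $v_i$, to cover the remaining edges. Your route instead exploits the order-$9$ automorphism $\tau$ to cut the $45$ edges into five regular orbits and then checks just the two induced $4$-cycles $\Delta_{12,33}[\{x_1,x_2,y_1,y_2\}]$ and $\Delta_{12,33}[\{v_1,x_1,y_1,z_2\}]$, whose edges together meet all five orbits. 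This is a bit more economical in the number of $4$-cycles one needs to exhibit and makes the role of $\tau$ explicit; the paper's version has the advantage of highlighting the structural reason (the three induced octahedra) behind most of the $4$-cycles. Either way the verification is a short check against the facet list.
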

\begin{proof}
    The vertices in the graph of $\Delta_{12,33}$ have either degree $8$ or $6$. This implies that every vertex is non-adjacent to at least two vertices, and hence $\Delta_{12,33}$ is not a suspension.\\
    We now show that for every edge $e\in\Delta_{12,33}$ there is an induced $4$-cycle containing $e$. We consider separately edges which do not contain any vertex in $\{v_1,v_2,v_3\}$ (Case 1) and those which contain exactly one  of these vertices (Case 2). Observe that there are no edges of the form $\{v_i,v_j\}$.
    \begin{itemize}
        \item[Case 1.] Every edge of the form $\{x_i,y_j\}$, $\{y_i,z_j\}$ and $\{x_i,z_j\}$ is contained in an induced subcomplex of $\Delta_{12,33}$ which is isomorphic to the boundary complex of a $3$-dimensional cross-polytope. In particular, every edge of this form is contained in an induced $4$-cycle.
        \item[Case 2.] Observe that $\{v_1,x_1,z_2,y_1\}$, $\{v_1,x_2,z_3,y_2\}$ and $\{v_1,x_3,z_1,y_3\}$ are induced $4$-cycles of $\Delta_{12,33}$. In particular every edge of the form $\{v_1,x_i\}$, $\{v_1,y_i\}$ or $\{v_1,z_i\}$ is contained in an induced $4$-cycle. Similarly, $\{v_2,y_1,x_3,z_1\}$, $\{v_2,y_2,x_1,z_2\}$ and $\{v_2,y_3,x_2,z_3\}$ are induced $4$-cycle, which shows that every edge of the form $\{v_2,x_i\}$, $\{v_2,y_i\}$ or $\{v_2,z_i\}$ is contained in an induced $4$-cycle. Finally, the $4$-cycles $\{v_3,x_1,y_2,z_3\}$, $\{v_2,x_2,y_3,z_1\}$ and $\{v_2,x_3,y_1,z_2\}$ ensure that also the edges of the form $\{v_3,x_i\}$, $\{v_3,y_i\}$ or $\{v_3,z_i\}$ are not contractible.
    \end{itemize}
    \end{proof}
    The following result shows that $\Delta_{12,33}$ answers \Cref{qu: question} in dimension $d=3$.
    
    \begin{theorem}\label{thm: equators}
     All induced equators of $\Delta_{12,33}$ are vertex links.
    \end{theorem}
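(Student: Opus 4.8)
The plan is to show that the vertex set $S$ of every equator $E=\Delta_{12,33}[S]$ (an induced $2$-sphere) is a vertex neighbourhood, so that $E=\lk_{\Delta_{12,33}}(w)$ for some vertex $w$; recall that in a flag complex $\lk(w)=\Delta[N(w)]$, so being a vertex link means being induced on some $N(w)$. Write $\Delta:=\Delta_{12,33}$, abbreviate $\{x_1,x_2,x_3\},\{y_1,y_2,y_3\},\{z_1,z_2,z_3\}$ by $\{x_\bullet\},\{y_\bullet\},\{z_\bullet\}$, and call a vertex set $v$-free if it avoids $\{v_1,v_2,v_3\}$. The key tool is that, since induced subcomplexes of flag complexes are flag and links in flag complexes are induced, for each $w\in S$ the complex $\lk_E(w)=\Delta[S\cap N(w)]$ is an induced $1$-sphere of $\lk_\Delta(w)$; in particular, for $w=v_i$ it is one of the three equatorial $4$-cycles of $\lk_\Delta(v_i)\cong\Diamond_3$ (these being exactly the induced $1$-spheres of an octahedron), the one determined by the antipodal pair it omits. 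I would then split on $k:=|S\cap\{v_1,v_2,v_3\}|$, normalising which $v_i$ occur by powers of the automorphism $\tau$.

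\emph{Case $k=0$.} If $S$ contains none of $\{x_\bullet\},\{y_\bullet\},\{z_\bullet\}$ then $\Delta[S]=\Gamma_2[S]$, which by \Cref{lem: equators psi} is not an equator; so $S$ contains one such triple, and we may assume $\{x_\bullet\}\subseteq S$. Then $\lk_E(x_1)$ is a $v$-free induced cycle of $\lk_\Delta(x_1)$, hence by \Cref{rem: on delta} a $4$-cycle on $\{x_2,x_3,y_2,y_3\}$ or on $\{x_2,x_3,y_3,z_1\}$. In the first subcase \Cref{rem: on delta} at $y_2$ puts $y_1$ in $S$, so $\{x_\bullet\},\{y_\bullet\}\subseteq S$; if also some $z_j\in S$ then \Cref{rem: on delta} at $z_j$ gives $\{z_\bullet\}\subseteq S$, whence $S=\{x_\bullet\}\cup\{y_\bullet\}\cup\{z_\bullet\}$, which spans the facet $x_1y_2y_3z_1$ and contradicts $\dim E=2$; so $S=\{x_\bullet\}\cup\{y_\bullet\}=N(v_1)$ and $E=\lk_\Delta(v_1)$. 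In the second subcase \Cref{rem: on delta} at $z_1$ and $y_3$ forces $\{z_\bullet\},\{y_\bullet\}\subseteq S$, yielding the same contradiction.

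\emph{Case $k\geq1$.} Normalise by $\tau$ so that $v_1\in S$ (and, when $k=2$, so that $S\cap\{v_1,v_2,v_3\}=\{v_1,v_3\}$). Let $C_1=\lk_E(v_1)$ be the equatorial $4$-cycle of $\lk_\Delta(v_1)=\Diamond([x_1,x_2,x_3],[y_1,y_2,y_3])$ omitting the antipodal pair $\{x_m,y_m\}$. Since $x_m,y_m\in N(v_1)$ are not vertices of $C_1=\lk_E(v_1)$, neither lies in $S$, so $S\cap\{x_\bullet\}=\{x_\bullet\}\setminus\{x_m\}$ and $S\cap\{y_\bullet\}=\{y_\bullet\}\setminus\{y_m\}$. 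If $k=1$: any $z_j\in S$ is non-adjacent to $v_1$, so $\lk_E(z_j)$ is a $v$-free cycle of $\lk_\Delta(z_j)$ and \Cref{rem: on delta} forces $\{z_\bullet\}\subseteq S$; but then $\Delta[S]$ spans a facet (one of $x_3y_2z_1z_3$, $x_1y_3z_1z_2$, $x_2y_1z_2z_3$, according as $m=1,2,3$), contradicting $\dim E=2$; hence $S=\{v_1\}\cup V(C_1)$ and $\Delta[S]=v_1*C_1$ is a $2$-disk, not a $2$-sphere. If $k=3$: running the same argument at $v_2$ forces $S\cap\{z_\bullet\}=\{z_\bullet\}\setminus\{z_m\}$, but the octahedron $\lk_\Delta(v_3)=\Diamond([x_1,x_2,x_3],[z_3,z_1,z_2])$ has only the equator pairing $\{x_\bullet\}\setminus\{x_m\}$ with $\{z_\bullet\}\setminus\{z_{\sigma(m)}\}$, where $\sigma$ is the fixed-point-free matching $x_1\leftrightarrow z_3$, $x_2\leftrightarrow z_1$, $x_3\leftrightarrow z_2$; as $\sigma(m)\neq m$ this is impossible. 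If $k=2$ (so $v_1,v_3\in S$, $v_2\notin S$): $\lk_E(v_3)$, being the equator of $\lk_\Delta(v_3)$ whose $x$-part is $\{x_\bullet\}\setminus\{x_m\}$, forces $S\cap\{z_\bullet\}=\{z_\bullet\}\setminus\{z_{\sigma(m)}\}$; hence $S=\{v_1,v_3\}\cup(\{x_\bullet\}\setminus\{x_m\})\cup(\{y_\bullet\}\setminus\{y_m\})\cup(\{z_\bullet\}\setminus\{z_{\sigma(m)}\})$, which, comparing with the neighbourhoods read off from \eqref{eq: facets delta}, is exactly $N(x_m)$; so $E=\lk_\Delta(x_m)$, and with $\tau$ this covers the link of every $x_i,y_i,z_i$.

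The step I expect to require the most care is the case $k\geq1$: one is tempted to cut $E$ along a vertex star into a disk and then, at a second vertex, into an annulus, but in the genuine equators $E=\lk_\Delta(x_i)$ the closed star of an interior vertex shares an edge with the boundary cycle, so that "annulus" is actually a disk and the naive topological bookkeeping breaks down; the argument must instead be run purely in terms of which vertices lie in $S$, combining the "induced $1$-sphere in an octahedron" constraint at each $v_i\in S$ with \Cref{rem: on delta} at the $x$-, $y$- and $z$-vertices that this forces into $S$. Once that is set up, $k=1$ and $k=3$ are vacuous, $k=0$ gives the links $\lk_\Delta(v_i)$, and $k=2$ gives the links $\lk_\Delta(x_i),\lk_\Delta(y_i),\lk_\Delta(z_i)$, so every equator is a vertex link.
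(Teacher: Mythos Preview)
Your argument is correct and, while it shares the paper's basic ``split on whether $S$ meets $\{v_1,v_2,v_3\}$'' skeleton, it organises both cases differently.

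In the $v$-free case the paper observes (via \Cref{rem: on delta}) that every vertex link in $E$ is a $4$-cycle, invokes \cite[Lemma~5.3]{NP} to conclude $E\cong\Diamond_3$, and then finishes by inspection. You instead use \Cref{lem: equators psi} (together with the remark following it) to force one of the full triples $\{x_\bullet\},\{y_\bullet\},\{z_\bullet\}$ into $S$, and then propagate with \Cref{rem: on delta}. This buys you a self-contained argument with no external reference.

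In the case $v_i\in S$, the paper normalises the $4$-cycle $\lk_E(v_1)$ to a specific one via $\tau^3$ and then argues by comparing $S$ with the two particular neighbourhoods $N(x_3)$, $N(y_3)$. Your finer split on $k=|S\cap\{v_\bullet\}|\in\{1,2,3\}$, keeping the index $m$ of the omitted antipodal pair floating, is more systematic: the incompatibility of the antipodal matchings in $\lk_\Delta(v_2)$ and $\lk_\Delta(v_3)$ cleanly kills $k=3$, the disk $v_1*C_1$ kills $k=1$, and $k=2$ visibly produces exactly the eight-vertex sets $N(x_m)$. This isolates the structural fact---not explicit in the paper's proof---that equators meet $\{v_1,v_2,v_3\}$ in either zero or two vertices, corresponding respectively to the links of the $v_i$ and the links of the remaining nine vertices.
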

    \begin{proof}
        Let $E=\Delta_{12,33}[S]$ be an induced equator of $\Delta_{12,33}$. We treat the cases $S\cap \{v_1,v_2,v_3\}\neq \emptyset$ and $S\cap \{v_1,v_2,v_3\}=\emptyset$ separately.
    \begin{itemize}
        \item[-] $S\cap\{v_1,v_2,v_3\}\neq \emptyset$. We can assume w.l.o.g. that $v_1\in S$. As observed in \Cref{rem: tau}, $\lk_{\Delta_{12,33}}(v_i)$ is isomorphic to $\Diamond_3$. Moreover, $\lk_{E}(v_1)$ must be an induced cycle of $\lk_{\Delta_{12,33}}(v_1)$, i.e., a $4$-cycle. Assume this $4$-cycle is $\{x_1,y_1,y_2,x_2\}$. This assumption is not restrictive, since applying $\tau$ is transitive on the $4$-cycles of $\lk_{\Delta_{12,33}}(v_1)$, i.e., $\tau^3$ and $\tau^6$ fix $v_1$ but permute the $4$-cycles in its link. Observe that if $x_3\in S$ or $y_3\in S$ then $\{v_1,x_1,x_2,x_3\}\subset S$ or $\{v_1,y_1,y_2,y_3\}\subset S$. As these two sets support $3$-simplices of $\Delta_{12,33}$ this would contradict the fact that $E$ is $2$-dimensional. We then have that $x_3\notin S$ and $y_3\notin S$.\\
    If we assume that $E$ is not the link of any vertex of $\Delta_{12,33}$, then either $\{z_3,v_2\}\subset S$ or $\{z_2,v_3\}\subset S$. This holds since $\lk_{\Delta_{12,33}}(x_3)=\Delta_{12,33}[V\setminus\{y_3,z_2,v_2\}]$ and
    $\lk_{\Delta_{12,33}}(y_3)=\Delta_{12,33}[V\setminus\{x_3,z_3,v_3\}]$, and hence $S$ cannot be contained (and cannot contain) $V\setminus\{y_3,z_2,v_2\}$ or $V\setminus\{x_3,z_3,v_3\}$. We then distinguish between two subcases.
    \begin{itemize}
        \item[-] $\{z_3,v_2\}\subset S$. From what discussed above, this would imply that one of the induced $4$-cycles of $\lk_{\Delta_{12,33}}(v_2)$ belongs to $E$. There is only one such cycle not containing $y_3$, namely $\{y_1,y_2,z_1,z_2\}$. This implies that $\{v_2,z_1,z_2,z_3\}\subset S$. As $\Delta_{12,33}[v_2,z_1,z_2,z_3]$ is a $3$-simplex, this contradicts the fact that $E$ is $2$-dimensional. 
        \item[-] $\{z_2,v_3\}\subset S$. The only induced $4$-cycle of $\lk_{\Delta_{12,33}}(v_3)$ which does not contain $x_3$ is $\{x_1,x_2,z_3,z_2\}$. Then $\{v_3,z_1,z_2,z_3\}\subset E$. Similarly to the previous case, $\Delta_{12,33}[v_3,z_1,z_2,z_3]$ is a $3$-simplex which contradicts the fact that $E$ is $2$-dimensional. 
    \end{itemize}
    \item[-] $S\cap\{v_1,v_2,v_3\}= \emptyset$. W.l.o.g. assume that $x_1\in S$. As $\lk_{\Delta_{12,33}}(x_1)$ is isomorphic to the $2$-sphere in \Cref{fig: link}, the link of $x_1$ in $E$ is then an induced cycle of $\lk_{\Delta_{12,33}}(x_1)$ and by \Cref{rem: on delta} such cycle must be a $4$-cycle. Therefore, $E$ is a flag $2$-sphere in which all vertex links are $4$-cycles. By \cite[Lemma 5.3]{NP} $E$ is isomorphic to $\Diamond_3$. Moreover, again by \Cref{rem: on delta}, either $\{x_2,x_3,y_2,y_3\}\subset S$ or $\{x_2,x_3,y_3,z_1\}\subset S$. In the first case $S=\{x_1,x_2,x_3,y_1,y_2,y_3\}$, which implies $\Delta_{12,33}[S]=\lk_{\Delta_{12,33}}(v_1)$. In the second case \Cref{rem: on delta} implies that $S=\{x_1,x_2,x_3,y_1,y_2,y_3,z_1,z_2,z_3\}$, which contradicts the fact that $E\cong \Diamond_3$.
    \end{itemize}
\end{proof}
\begin{remark}\label{rem: delta is minimal}
    $\Delta_{12,33}$ has the minimum number of vertices among all the complexes satisfying the conditions in \Cref{qu: question}. This follows from \cite{NP}, where the authors characterize flag homology spheres (a generalization of simplicial spheres) with few vertices. In particular, it follows from \cite[Proposition 5.4, 5.5, 5.6] {NP} that any flag simplicial $3$-sphere $\Delta$ with $f_0(\Delta)\leq 11$ has a contractible edge.
\end{remark}
\section{Simplicial join}
In this last section we show that the properties proved in \Cref{thm no contractible} and \Cref{thm: equators} are preserved taking simplicial join.
\begin{proposition}\label{pro: join}
    Let $\Phi_1$ be a flag simplicial $m$-sphere and $\Phi_2$ be a flag simplicial $n$-sphere such that \begin{itemize}
        \item[(i)] $\Phi_1$ and $\Phi_2$ have no contractible edge and they are not a suspension;
        \item[(ii)] All the induced equators of $\Phi_1$ and $\Phi_2$ are links of a vertex
    \end{itemize}
    Then $\Phi_1*\Phi_2$ is a flag simplicial $(n+m+1)$-sphere satisfying properties (i) and (ii).
\end{proposition}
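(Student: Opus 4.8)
The plan is to verify each of the three properties for $\Phi_1 * \Phi_2$ by reducing questions about the join to questions about the two factors, using the standard facts that $\lk_{\Phi_1*\Phi_2}(v) = \lk_{\Phi_1}(v) * \Phi_2$ for $v \in \Phi_1$ (and symmetrically), that the join of two flag complexes is flag, and that the join of an $m$-sphere and an $n$-sphere is an $(m+n+1)$-sphere. Flagness and sphere-ness of $\Phi_1*\Phi_2$ are immediate from these facts, so the content is in (i) and (ii).

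For the "not a suspension" half of (i): a flag complex $\Phi$ is a suspension if and only if its graph has two non-adjacent vertices each adjacent to every other vertex, equivalently $\Phi = \langle\{v\},\{w\}\rangle * \lk_\Phi(v)$. If $\Phi_1 * \Phi_2$ were a suspension with suspension points $v,w$, then since every vertex of $\Phi_1$ is adjacent (in the join) to every vertex of $\Phi_2$, the non-adjacency of $v$ and $w$ forces $v$ and $w$ to lie in the same factor, say $\Phi_1$; then $v$ and $w$ are non-adjacent in $\Phi_1$ and each is adjacent in $\Phi_1$ to all other vertices of $\Phi_1$, making $\Phi_1$ a suspension, a contradiction. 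For the "no contractible edge" half: an edge $e$ of a flag complex is contractible iff it lies in no induced $4$-cycle. Every edge of $\Phi_1*\Phi_2$ is either an edge of $\Phi_1$, an edge of $\Phi_2$, or a "crossing" edge $\{p,q\}$ with $p\in\Phi_1$, $q\in\Phi_2$. An edge $e$ inside $\Phi_1$ lies, by hypothesis, in an induced $4$-cycle $C$ of $\Phi_1$; I must check $C$ remains induced in the join — it does, since no new edges are added among vertices of $\Phi_1$. A crossing edge $\{p,q\}$: pick any vertex $p'\neq p$ of $\Phi_1$ non-adjacent to $p$ in $\Phi_1$ (exists since $\Phi_1$ is not a suspension, indeed every vertex of a non-suspension flag sphere has a non-neighbour — here one should note $\Phi_1$ is not a simplex so such $p'$ exists; more carefully, since $\Phi_1$ is not a suspension, for the given $p$ there is $p'$ non-adjacent to it) and similarly $q'\neq q$ in $\Phi_2$ non-adjacent to $q$; then $\{p,q,p',q'\}$ induces a $4$-cycle $p$–$q$–$p'$–$q'$–$p$ because $pq',p'q,pq$ are edges of the join while $pp'$ and $qq'$ are non-edges. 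The one gap to fill is the existence of a non-neighbour for every vertex, which follows from: a flag sphere in which some vertex is adjacent to all others is a cone, hence not a sphere — so actually this is automatic and worth stating cleanly.

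For (ii), let $E = (\Phi_1*\Phi_2)[S]$ be an induced equator, so $\dim E = n+m$ and $|E|\cong\mathbb{S}^{n+m}$. Write $S_1 = S\cap V(\Phi_1)$, $S_2 = S\cap V(\Phi_2)$, so $E = \Phi_1[S_1]*\Phi_2[S_2]$. A join $A*B$ of two complexes is a sphere precisely when $A$ and $B$ are both spheres (of complementary dimensions summing to $\dim E - 1$), so $\Phi_1[S_1]$ is a simplicial $i$-sphere and $\Phi_2[S_2]$ is a simplicial $j$-sphere with $i+j+1 = n+m$; in particular both are induced spheres in their factors. I then want to say each is either the whole factor or an induced equator of it. If $\Phi_1[S_1] = \Phi_1$ then $i=m$, forcing $j = n-1$, so $\Phi_2[S_2]$ is an induced equator of $\Phi_2$, hence by hypothesis $\Phi_2[S_2] = \lk_{\Phi_2}(w)$ for some vertex $w$, and then $E = \Phi_1 * \lk_{\Phi_2}(w) = \lk_{\Phi_1*\Phi_2}(w)$, a vertex link. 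Symmetrically if $\Phi_2[S_2] = \Phi_2$. The remaining case is $\Phi_1[S_1]\subsetneq\Phi_1$ and $\Phi_2[S_2]\subsetneq\Phi_2$: then $i\le m-1$ and $j\le n-1$, so $i+j+1 \le m+n-1 < m+n$, contradicting $i+j+1 = n+m$. Hence this case cannot occur and we are done.

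The main obstacle I anticipate is pinning down precisely the statement "a join $A*B$ is a sphere iff both $A$ and $B$ are spheres of the right dimensions," together with the dimension bookkeeping $i+j+1 = \dim E$ — this is the load-bearing topological input and needs a clean citation or a short argument (e.g. via reduced homology of joins: $\widetilde H_*(A*B) = \widetilde H_{*-1}(A)\otimes\widetilde H_{*-1}(B)$ up to a shift, so $A*B$ has the homology of a sphere iff each factor does, combined with the fact that an induced subcomplex of a flag sphere that is a homology sphere is a genuine sphere, or simply that we only need these factors to be spheres because $\Phi_i$ is a flag sphere and its induced spheres behave well). The other mild subtlety is making sure the four-vertex induced $4$-cycles I exhibit really are \emph{induced} — this is where I will explicitly invoke that the join operation introduces no new edges within a factor and all cross pairs are edges, so non-edges are exactly the within-factor non-edges. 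Everything else is formal manipulation of the join.
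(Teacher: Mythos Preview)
Your proof is correct and follows essentially the same route as the paper: the same three-way case split on edges for (i), and for (ii) the same decomposition $E=\Phi_1[S_1]*\Phi_2[S_2]$ into a join of induced spheres followed by a dimension count forcing one factor to be the whole $\Phi_i$ and the other to be an induced equator, hence a vertex link. You are simply more explicit than the paper about the load-bearing fact that a join is a sphere only if both factors are, and you note (correctly) that the existence of a non-neighbour for each vertex is automatic for any flag sphere rather than requiring the ``not a suspension'' hypothesis.
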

\begin{proof}
    It is well known that $\Phi_1*\Phi_2$ is a flag simplicial sphere, and that the $\Phi_1*\Phi_2$ is a suspension if and only if either $\Phi_1$ or $\Phi_2$ is a suspension.\\
    By definition, the edges of $\Phi_1*\Phi_2$ are either edges of $\Phi_1$, edges of $\Phi_2$ or edges $\{a_1,a_2\}$, with $a_i\in\Phi_i$. Edges of the first two types are contained induced $4$-cycles of $\Phi_1$ or $\Phi_2$, which are induced $4$-cycles of $\Phi_1*\Phi_2$. In the third case, $\{a_1,a_2,b_1,b_2\}$ is an induced $4$-cycle, for any $b_1\in\Phi_1$ not adjacent to $a_1$ and any $b_2\in\Phi_2$ not adjacent to $a_2$. Observe that such $b_1$ and $b_2$ exist, as any vertex in any flag sphere which is not a suspension is not adjacent to at least two vertices.\\
    Let $E$ be an induced equator of $\Phi_1*\Phi_2$. Then $E=\Sigma_1*\Sigma_2$, where $\Sigma_1$ is an induced subcomplex of $\Phi_1$ homeomorphic to a $(k-1)$-sphere, and $\Sigma_2$ is an induced subcomplex of $\Phi_2$ homeomorphic to a $(n+m-k)$-sphere. By dimension reasons, we have $k=n$ or $k=n+1$. In the first case $E=\Sigma_1*\Phi_2=\lk_{\Phi_1}(v)*\Phi_2=\lk_{\Phi_1*\Phi_2}(v)$, while in the second case $E=\Phi_1*\Sigma_2=\Phi_1*\lk_{\Phi_2}(v)=\lk_{\Phi_1*\Phi_2}(v)$. 
\end{proof}
Combining this result with the construction in \Cref{sec 2} we obtain the following.
\begin{corollary}
    For every $k\geq 0$ the $k$-fold join $\Delta_{12,33}^k:=\underbrace{\Delta_{12,33}*\cdots*\Delta_{12,33}}_{k\text{-times}}$ is a flag simplicial $(4k+3)$-sphere such that:
    \begin{itemize}
        \item[-] $\Delta_{12,33}^k$ has no contractible edge and it is not a suspension;
        \item[-] All the induced equators of $\Delta_{12,33}^k$ are links of a vertex.
    \end{itemize}
\end{corollary}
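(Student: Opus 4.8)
The plan is to prove the corollary by induction on $k$, using \Cref{pro: join} as the inductive step. First I would dispose of the base case $k=0$: here $\Delta_{12,33}^0=\Delta_{12,33}$, and the three required properties are exactly the content of \Cref{thm: 3-sphere}, \Cref{thm no contractible}, and \Cref{thm: equators}, while the dimension $4\cdot 0+3=3$ matches the fact that $\Delta_{12,33}$ is a $3$-sphere. For $k=1$ there is nothing new to check either, since $\Delta_{12,33}^1=\Delta_{12,33}$.

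For the inductive step, suppose $k\geq 1$ and that $\Delta_{12,33}^k$ is a flag simplicial $(4k+3)$-sphere with no contractible edge, which is not a suspension, and all of whose induced equators are vertex links. I would then apply \Cref{pro: join} with $\Phi_1=\Delta_{12,33}^k$ (so $m=4k+3$) and $\Phi_2=\Delta_{12,33}$ (so $n=3$): hypothesis (i) holds for $\Phi_1$ by the inductive assumption and for $\Phi_2$ by \Cref{thm no contractible}, and hypothesis (ii) holds for $\Phi_1$ by the inductive assumption and for $\Phi_2$ by \Cref{thm: equators}. The conclusion of \Cref{pro: join} is that $\Phi_1*\Phi_2=\Delta_{12,33}^{k+1}$ is a flag simplicial sphere of dimension $m+n+1=(4k+3)+3+1=4(k+1)+3$, again with no contractible edge, not a suspension, and with all induced equators being vertex links. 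This closes the induction.

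I do not expect any serious obstacle here: the corollary is essentially a formal consequence of \Cref{pro: join} together with the three theorems about $\Delta_{12,33}$. The only point that requires a moment's care is the bookkeeping of dimensions in the join — checking that an $(4k+3)$-sphere joined with a $3$-sphere yields a $(4k+7)=(4(k+1)+3)$-sphere — and making sure the hypotheses of \Cref{pro: join} are genuinely verified at each stage rather than merely assumed. Since \Cref{pro: join} is symmetric in its two arguments and both factors at every stage satisfy (i) and (ii), the induction propagates cleanly, so the proof is short.
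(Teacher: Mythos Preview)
Your approach---induction on $k$ with \Cref{pro: join} supplying the inductive step and \Cref{thm: 3-sphere}, \Cref{thm no contractible}, \Cref{thm: equators} supplying the base---is exactly what the paper intends (the corollary is stated there without proof, as an immediate consequence of \Cref{pro: join}). The only slip is the throwaway sentence ``$\Delta_{12,33}^1=\Delta_{12,33}$,'' which conflicts with the dimension count $4\cdot 1+3=7$; delete that line and the argument is clean.
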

\section{Conclusion}
We are currently studying how to generalize our construction to obtain flag simplicial $d$-spheres answering \Cref{qu: question} in the positive for any $d$ or with more vertices. Even though we proved in \Cref{pro: join} that taking joins gives one way to produce such complexes, it would be considerably more interesting to find flag spheres in any dimension satisfying the conditions in \Cref{qu: question} which are not the join of spheres of smaller dimension. Following our construction, at least in the $3$-dimensional case, would require finding suitable solid tori which glue together to a flag sphere satisfying the properties in \Cref{qu: question}. The most challenging task in generalizing our result appears to be the description of induced equators. All the same, we believe there exist many such simplicial complexes and investigating their structure would be relevant to tackle the conjectures posed in \cite{Gal} and \cite{NP}. We ran experiments performing a sequence of random edge subdivisions followed by random edge contractions on a flag sphere, as by \cite{LuN} these operations suffice to connect any two flag PL spheres. In this way we found more examples of $3$-spheres giving a positive answer to \Cref{qu: question}, on 13, 14, 15 and 16 vertices. We make the list of facets of these simplicial complexes as well as the code used for the experiments available at \cite{VenGit}. We conclude with the following question.
\begin{question}
    Fix $d\geq 3$. Are there infinitely many flag $d$-spheres which satisfy conditions (i), (ii) and (iii) in \Cref{qu: question}?
\end{question}
\section*{Acknowledgement}
We thank Alessio D'Al\`{i} and Joseph Doolittle and Martina Juhnke-Kubitzke for helpful discussions and Eran Nevo for suggesting the problem and for pointing out the observation in \Cref{rem: delta is minimal}. 

\bibliography{bibliography}
\bibliographystyle{alpha}
\end{document}